\newtheorem{thm}{Theorem}[section]
\theoremstyle{definition}
\theoremstyle{remark}
\newtheorem{remark}[thm]{Remark}
\newcommand{\af}{\alpha}
\newcommand{\bt}{\beta}
\newcommand{\gm}{\gamma}
\newcommand{\dt}{\delta}
\newcommand{\ep}{\varepsilon}
\newcommand{\om}{\omega}
\newcommand{\Gm}{\Gamma}
\newcommand{\Dt}{\Delta}
\newcommand{\R}{{\mathbb{R}}}
\newcommand{\C}{{\mathbb{C}}}
\newcommand{\Hess}{\mathrm{Hess}}
\newcommand{\nab}{\nabla}
\newcommand{\pr}{\partial}
\begin{document}


\title[Matrix estimate for the Green function on K\"{a}hler manifolds]
{A Matrix Li-Yau-Hamilton estimate for the Green function on K\"ahler manifolds}

\author[Jiewon Park]{Jiewon Park}
\address{
Department of Mathematics\\
Yale University\\
12 Hillhouse Ave, New Haven, CT 06511\\
United States} \email{jiewon.park\-@\-yale.\-edu}

\begin{abstract}   
In this paper we prove a matrix Li-Yau-Hamilton inequality for the Green function on complete K\"ahler manifolds with nonnegative holomorphic bisectional curvature. This estimate can be seen as an elliptic analogue of the matrix estimate of Cao and Ni for the heat equation on K\"ahler manifolds, or the complex analogue of the estimate for Riemannian manifolds obtained previously by the author.
\end{abstract} 

\maketitle

\section{Introduction}

In \cite{LY}, Li and Yau proved a fundamental gradient estimate for positive solutions to the heat equation on complete Riemannian manifolds with Ricci curvature bounded below. Since the Harnack inequality can be derived by integrating the inequality along geodesics, this estimate itself is often referred to as gradient Harnack inequality. In \cite{H2}, Hamilton showed the following generalization of the gradient Harnack inequality. Inequalities of this type are often called matrix Li-Yau-Hamilton estimates.

\begin{thm}[\cite{H2}, Main Theorem] \label{hamilton_thm}
Let $(M,g)$ be a compact Riemannian manifold and $f: M \to \R$ a positive solution to the heat equation on $M$. Suppose that $M$ has nonnegative sectional curvature and parallel Ricci curvature. Then for any $t>0$ and any vector field $V_i$ on $M$,
\begin{equation} \label{hamilton_heat}
	f_{ij} + \frac{f}{2t} g_{ij} + D_i f \cdot V_j + D_jf \cdot V_i + f V_i V_j \geq 0.
\end{equation}

\end{thm}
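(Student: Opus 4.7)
The plan is to reduce the vector-field statement to a matrix inequality on the Hessian, and then run a parabolic tensor maximum principle.

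First, I would view the left-hand side of \eqref{hamilton_heat} as a quadratic polynomial in $V$ at each point. A direct expansion gives
$$ f_{ij} + \frac{f}{2t} g_{ij} + D_if\, V_j + D_jf\, V_i + f V_i V_j = H_{ij} + f \bigl(V + f^{-1}\nab f\bigr)_i \bigl(V + f^{-1}\nab f\bigr)_j, $$
where $H_{ij} := f_{ij} - f_i f_j/f + f g_{ij}/(2t)$. Since the second summand on the right is positive semi-definite, \eqref{hamilton_heat} holding for every $V$ is equivalent to $H_{ij} \geq 0$, i.e.\ to $\wt H_{ij} := (\log f)_{ij} + g_{ij}/(2t) \geq 0$. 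So the task reduces to proving $\wt H \geq 0$ on $M \times (0,\infty)$.

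Next, I would compute the evolution of $\wt H_{ij}$. Setting $u = \log f$, so that $u_t = \Delta u + |\nab u|^2$, the commutator $[\nab^2, \Delta]$ on functions combined with the Bochner formula produces, among other things, a term in $\nab R$; the hypothesis that Ricci is parallel eliminates this contribution. After substituting $u_{ij} = \wt H_{ij} - g_{ij}/(2t)$ into the curvature terms and accounting for $\pr_t(g_{ij}/(2t))$, one arrives at an equation of the schematic form
\begin{equation*}
(\pr_t - \Delta - 2 \nab u \cdot \nab)\, \wt H_{ij} = 2 R_{ikjl} \wt H^{kl} - R_{ik} \wt H^{k}{}_{j} - R_{jk} \wt H^{k}{}_{i} - 2\, \wt H_{ik} \wt H^{k}{}_{j} + \text{(lower-order in }\wt H\text{)}.
\end{equation*}

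The last step is the tensor maximum principle. Since $\wt H_{ij} \sim g_{ij}/(2t) \to +\infty$ as $t \to 0^+$, $\wt H$ is strictly positive on a short initial interval. After an auxiliary $\ep g_{ij}$ perturbation, suppose for contradiction that $t_0 > 0$ is the first time $\wt H$ acquires a null eigenvector $X$ at a point $p$, extended by parallel transport so that $\nab X|_p = 0$. Then $\ph := \wt H_{ij} X^i X^j$ attains a minimum of $0$ at $(p, t_0)$, forcing $(\pr_t - \Delta - 2 \nab u \cdot \nab)\ph \leq 0$ there. On the other hand, the null condition $\wt H_{ij} X^j = 0$ kills the Ricci-times-$\wt H$ and quadratic-in-$\wt H$ terms in the evolution equation, leaving $2 R_{ikjl} X^i X^j \wt H^{kl}$. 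Nonnegative sectional curvature makes $R_{ikjl} X^i X^j$ positive semi-definite in $(k,l)$, so its trace against $\wt H^{kl} \geq 0$ is nonnegative; together with the sign of the lower-order terms this contradicts the maximum-principle inequality. Letting $\ep \to 0$ finishes the argument.

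The main obstacle is the evolution-equation derivation, where the curvature-commutator bookkeeping is delicate. The two hypotheses enter the proof at different stages in a complementary fashion: parallel Ricci is used precisely to discard the $\nab R$ terms that appear when commuting $\nab^2$ with $\Delta$, while nonnegative sectional curvature is what supplies the final positivity at a null direction through the surviving $R_{ikjl} X^i X^j \wt H^{kl}$ term.
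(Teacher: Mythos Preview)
The paper does not prove this statement: Theorem~\ref{hamilton_thm} is quoted from Hamilton~\cite{H2} as background and motivation, and no argument for it appears anywhere in the paper. So there is no ``paper's own proof'' to compare against. That said, your proposal is exactly the strategy of Hamilton's original proof in~\cite{H2}: complete the square in $V$ to reduce~\eqref{hamilton_heat} to the positivity of $\wt H_{ij}=(\log f)_{ij}+g_{ij}/(2t)$, derive its heat-type evolution (parallel Ricci is precisely what kills the $\nab\mathrm{Ric}$ commutator terms), and then run Hamilton's tensor maximum principle, using nonnegative sectional curvature for the sign of the surviving $R_{ikjl}X^iX^j\wt H^{kl}$ term at a first null eigenvector. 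This is also the template the present paper adapts, in the elliptic K\"ahler setting, to prove Theorems~\ref{mainthm} and~\ref{mainthm2}.

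Two minor remarks on the details. First, the sign of the quadratic term in your schematic evolution should be $+2\,\wt H_{ik}\wt H^{k}{}_{j}$, not $-2\,\wt H_{ik}\wt H^{k}{}_{j}$; and the ``lower-order'' remainder is just $-\tfrac{2}{t}\wt H_{ij}$ (plus a curvature contribution of the form $2R_{ikjl}u^ku^l$ arising from commuting third derivatives, which is nonnegative on $X^iX^j$ by the sectional-curvature hypothesis). None of this affects your conclusion, since the Ricci-$\wt H$, quadratic-$\wt H$, and $\tfrac{1}{t}\wt H$ terms all vanish when contracted with a null eigenvector of $\wt H$. Second, the claim $\wt H_{ij}\sim g_{ij}/(2t)\to +\infty$ as $t\to 0^+$ tacitly assumes the solution extends smoothly and positively to $t=0$, which is the standard setting; on a compact manifold this is automatic for any positive heat solution started from positive smooth data.
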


Although the drawback is that the curvature assumption is more restrictive than just $\mathrm{Ric} \geq 0$, Hamilton's estimate is on the full Hessian of $f$. By taking the trace of (\ref{hamilton_heat}) one recovers the Li-Yau gradient estimate.

An elliptic analogue of the Li-Yau gradient estimate is the gradient estimate for the Green function, obtained by Colding in \cite{C} for nonparabolic manifolds with nonnegative Ricci curvature. We recall that a Riemannian manifold of dimension $m$ is said to be {\it nonparabolic} if it possesses a positive Green function, that is, a smooth function $G: M \backslash D \to (0, \infty)$ so that $$\Delta G(x,\cdot) = -m(m-2) \om_m \delta_x$$ 
where $D = \{(x,x) \mid x \in M\}$ is the diagonal and $\om_m$ is the volume of the unit $m$-ball. By fixing a pole $x\in M$, we may understand $G$ as a smooth one-variable function on $M\backslash\{x\}$ given by $G(y) = G(x,y)$. In particular,  $G = r^{2-m}$ when $M = \C^n$ with the flat metric. Although the positive Green function is nonunique, one can always choose the minimal one. Given the presence of a gradient estimate, it is natural to ask whether one can extend it to a full Hessian estimate on the Green function, which would be an elliptic analogue of Theorem \ref{hamilton_thm}. Indeed, a matrix Harnack inequality holds on manifolds with nonnegative sectional curvature and parallel Ricci curvature \cite{P}. In fact the nonnegative sectional curvature condition can be slightly weakened in this case; for the precise statement of the assumptions, we refer the reader to \cite{P}.

The aim of this paper is to prove the following complex geometry analogue of the matrix inequality for the Green function. 

\begin{thm}\label{mainthm}
Let $M$ be a nonparabolic complete K\"ahler manifold of complex dimension $ n\geq$ 2, and $p \in M$. Suppose that $M$ has nonnegative holomorphic bisectional curvature. Let $G$ be the minimal positive Green function with pole $x \in M$. Suppose that for some $C \in \R$,
\begin{align} \label{hess_at_pole}
	G_{\af \overline{\bt}} +\frac{2n}{2-2n} \cdot \frac{G_\af G_{\overline{\bt}}}{G}  \geq C G^{\frac{2n}{2n-2}} g_{\af \overline{\bt}} 
\end{align}
on a neighborhood of $x$, and that for some $D \in \R$,
\begin{align} \label{inf1}
	G_{\af \overline{\bt}} +\frac{2n}{2-2n} \cdot \frac{G_\af G_{\overline{\bt}}}{G}  \geq D G^{\frac{2n}{2n-2}} g_{\af \overline{\bt}} 
\end{align}
on $M \backslash \{x\}$. Then the Hessian estimate (\ref{hess_at_pole}) holds on $M \backslash \{x\}$, no matter how small $D$ is. In other words, the constant $C$ given near the pole $x$ propagates to all of $M$.
\end{thm}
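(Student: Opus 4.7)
The plan is to recast the theorem as an upper Hessian bound for the auxiliary function $b:=G^{1/(1-n)}$ and then propagate this bound by a tensorial elliptic maximum principle.

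A direct computation yields
\[
G_{\af \overline{\bt}} \;-\; \tfrac{n}{n-1}\,\tfrac{G_\af G_{\overline{\bt}}}{G} \;=\; (1-n)\, G^{n/(n-1)}\, b_{\af \overline{\bt}},
\]
so, since $1-n<0$, the local hypothesis (\ref{hess_at_pole}) is equivalent to the complex Hessian upper bound $b_{\af\overline{\bt}}\le\tfrac{C}{1-n}\,g_{\af\overline{\bt}}$ on a neighborhood of $x$, while the global hypothesis (\ref{inf1}) is equivalent to the weaker global bound $b_{\af\overline{\bt}}\le\tfrac{D}{1-n}\,g_{\af\overline{\bt}}$ on $M\setminus\{x\}$. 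Setting $h_{\af\overline{\bt}}:=b_{\af\overline{\bt}}-\tfrac{C}{1-n}\,g_{\af\overline{\bt}}$, it suffices to prove that if $h\le 0$ as a Hermitian form on a neighborhood of $x$ and $h$ is bounded above on all of $M\setminus\{x\}$, then $h\le 0$ on all of $M\setminus\{x\}$. Harmonicity of $G$ on $M\setminus\{x\}$ translates into the clean nonlinear equation $b\,g^{\af\overline{\bt}}b_{\af\overline{\bt}} = n\,g^{\af\overline{\bt}}b_\af b_{\overline{\bt}}$ for $b$.

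Next I would derive a Bochner-type identity for $h$. Differentiating the nonlinear equation for $b$ twice (once by $\pr_\af$, once by $\pr_{\overline{\bt}}$), commuting covariant derivatives using the K\"ahler curvature identities, and using the equation itself to eliminate remaining occurrences of the Laplacian of $b$, should give an identity of the form
\[
L\,h_{\af\overline{\bt}} \;=\; R_{\af\overline{\bt}\gm\overline{\dt}}\,h^{\gm\overline{\dt}} \;+\; \text{(first-order and quadratic-in-}h\text{ terms)},
\]
where $L$ is a drift Laplacian arising from the linearization of the PDE for $b$, its drift coefficients being built from $\nab b$ and $b^{-1}$. The nonnegativity of the bisectional curvature makes the curvature term $R_{\af\overline{\bt}\gm\overline{\dt}}h^{\gm\overline{\dt}}$ nonnegative whenever $h$ is positive semi-definite in the extremal direction, which is exactly what is needed at a maximum.

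The proof is then closed by a maximum principle argument. Suppose for contradiction that the largest eigenvalue $\ld(y)$ of $h(y)$ is positive at some point. By the hypotheses, $\ld\le 0$ near $x$ and $\ld\le\tfrac{D-C}{1-n}$ on all of $M\setminus\{x\}$, so $\ld$ is bounded above and the supremum is not attained near $x$. Since $M$ has nonnegative bisectional (hence Ricci) curvature, a standard Omori--Yau argument applies: choosing a near-extremal eigenvector at each approximate-maximum point, parallel-transporting it locally, and inserting into the identity above produces a sign contradiction in the limit. The principal difficulty will be the careful bookkeeping of the first-order and $h$-quadratic terms that emerge from twice-differentiating the nonlinear equation for $b$; verifying that these either vanish or carry the correct sign at the extremal direction of $h$ is the step where the precise coefficient $n/(n-1)$ in the definition of $Q$ and the K\"ahler symmetries of $R_{\af\overline{\bt}\gm\overline{\dt}}$ are essential.
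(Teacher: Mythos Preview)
Your reformulation in terms of $b=G^{1/(1-n)}$ is correct, and the overall plan---a Bochner identity for the complex Hessian followed by a tensor maximum principle---is exactly the right circle of ideas. The paper proceeds the same way, but works directly with
\[
T_{\af\overline{\bt}}=G_{\af\overline{\bt}}+\tfrac{2n}{2-2n}\,\tfrac{G_\af G_{\overline{\bt}}}{G}-C\,G^{\frac{2n}{2n-2}}\,g_{\af\overline{\bt}}
\;=\;(1-n)\,G^{\frac{n}{n-1}}\,h_{\af\overline{\bt}},
\]
computes $\Dt T_{\af\overline{\bt}}$ explicitly, and applies Hamilton's null--eigenvector tensor maximum principle on the compact shells $\{\ep<r<R\}$, letting $\ep\to0$, $R\to\infty$.

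There is, however, a genuine gap in your handling of noncompactness. Your Omori--Yau step selects a sequence of points where the \emph{largest} eigenvalue $\ld$ of $h$ approaches its (assumed positive) supremum. At such points $h$ has a strictly positive eigenvalue in the extremal direction $V$, but the other eigenvalues of $h$ can be negative; hence $h$ is \emph{not} semidefinite there, and $R_{\af\overline{\bt}\gm\overline{\dt}}h^{\gm\overline{\dt}}V^{\af}V^{\overline{\bt}}=\sum_\gm R(V,\overline{V},e_\gm,\overline{e_\gm})\,\ld_\gm$ has no sign from nonnegative bisectional curvature alone. Your sentence ``nonnegative whenever $h$ is positive semi-definite in the extremal direction'' is not a correct criterion: what is needed is semidefiniteness of the full tensor, which Omori--Yau does not provide. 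This is precisely why the paper does \emph{not} pass to $b$: keeping the weight $G^{\frac{n}{n-1}}$ in $T$ forces $T\to 0$ as $r\to\infty$ (using Colding's gradient estimate $|\nab b|\le1$ to control $G^{-1}G_\af G_{\overline{\bt}}$), so one can run Hamilton's argument on compact domains, where at the first point of failure $T$ is still $\ge0$ with a null eigenvector and the curvature term has the right sign. Dropping that weight by switching to $h$ destroys exactly the decay at infinity that makes the argument close; the paper's closing Remark notes that an Omori--Yau/Hopf style approach to the boundary at infinity runs into sign problems here.

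A secondary point: you defer the Bochner computation (``should give an identity of the form\ldots''; ``the principal difficulty will be the careful bookkeeping''). In the paper this computation is the core of the proof: one must verify that each of roughly a dozen terms in $\Dt T_{\af\overline{\bt}}$ is nonpositive on a null eigenvector, and several of these (the $(C^2+C)$ and $(C+1)^2C$ terms, and the $\frac{C}{G}(G_{\af\gm}-G_\af G_\gm/G)(\overline{\,\cdot\,})$ term) require the specific structure rather than abstract sign reasoning.
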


The theorem can be stated with error terms for holomorphic bisectional curvature merely bounded below, instead of nonnegative. The notable advantage of the complex case over the Riemannian case is that we only need to assume nonnegativity of the holomorphic bisectional curvature, which is a much weaker condition than nonnegative sectional curvature and parallel Ricci curvature. 

The coefficient $\displaystyle{\frac{2n}{2-2n}}$ might seem arbitrary at first sight. In fact it is the most natural choice, since the tensor on the left hand side is the canonical one to consider, in the sense that if $M = \C^n$ then equality holds in (\ref{hess_at_pole}) with $C = (2n-2)$,
\begin{align} \label{hess_Cn}
G_{\af \overline{\bt}} +\frac{2n}{2-2n} \cdot \frac{G_\af G_{\overline{\bt}}}{G} = (2n-2) G^{\frac{2n}{2n-2}} g_{\af \overline{\bt}}.	
\end{align}

However it is also possible to formulate Theorem \ref{mainthm} for not only $\displaystyle{\frac{2n}{2n-2}}$, but any other negative coefficient, as stated in the following theorem.
 
\begin{thm}\label{mainthm2}
Let $C \leq -1$. Let $M$ be a nonparabolic complete K\"ahler manifold of complex dimension $ n\geq$ 2, and $p \in M$. Suppose that $M$ has nonnegative holomorphic bisectional curvature. Let $G$ be the minimal positive Green function with pole $x \in M$. Suppose that for some $D \in \R$,
\begin{align} \label{hess_at_pole}
	G_{\af \overline{\bt}} + C \cdot \frac{G_\af G_{\overline{\bt}}}{G}  \geq D G^{\frac{2n}{2n-2}} g_{\af \overline{\bt}} 
\end{align}
on a neighborhood of $x$, and that for some $E, F \in \R$,
\begin{align} \label{inf2}
	G_{\af \overline{\bt}} + E \cdot \frac{G_\af G_{\overline{\bt}}}{G}  \geq F G^{\frac{2n}{2n-2}} g_{\af \overline{\bt}} 
\end{align}
on $M \backslash \{x\}$. Then the Hessian estimate (\ref{hess_at_pole}) holds on $M \backslash \{x\}$, no matter how small $F$ is. In other words, the constant $D$ given near the pole $x$ propagates to all of $M$.
\end{thm}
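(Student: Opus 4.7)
The approach is to reduce to a scalar maximum principle, paralleling the strategy of Theorem \ref{mainthm}. The key algebraic device is the substitution $\Phi := G^{C+1}$ (with the borderline case $C=-1$ handled as a limit from $C<-1$, or via $\Phi := \log G$). A direct K\"{a}hler computation yields
\[
\Phi_{\af\overline{\bt}} \;=\; (C+1)\, G^{C}\left( G_{\af\overline{\bt}} + C\cdot\frac{G_\af G_{\overline{\bt}}}{G}\right).
\]
Because $(C+1)G^C \leq 0$ under the assumption $C\leq -1$, the hypothesized lower bound on the Hessian-type tensor near $x$ is equivalent to the upper bound $\Phi_{\af\overline{\bt}} \leq (C+1)D\, G^{C+\frac{n}{n-1}} g_{\af\overline{\bt}}$. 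Moreover, using $\Delta G = 0$ on $M\setminus\{x\}$, one checks
\[
\Delta \Phi \;=\; C(C+1)\, G^{C-1}|\nab G|^2 \;\geq\; 0,
\]
so $\Phi$ is \emph{subharmonic} on $M\setminus\{x\}$ precisely because $C\leq -1$. This is the structural reason for the hypothesis and it supplies the correct sign for critical terms in the argument that follows.

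Next I would run the maximum principle on a scalar. Set $u:= G^{C+\frac{n}{n-1}}$ and $A:=(C+1)D$, and define
\[
\Lambda(y) \;:=\; \sup_{v\in T_y^{1,0}M\setminus\{0\}}\, \frac{\Phi_{\af\overline{\bt}}\, v^\af \overline{v}^\bt}{u\cdot g_{\af\overline{\bt}}\, v^\af \overline{v}^\bt}.
\]
The desired conclusion is $\Lambda \leq A$ on $M\setminus\{x\}$. The near-pole hypothesis gives $\Lambda\leq A$ on a neighborhood of $x$, and the global hypothesis (\ref{inf2}), combined with the algebraic decomposition
$G_{\af\overline{\bt}}+C\, G_\af G_{\overline{\bt}}/G = (G_{\af\overline{\bt}}+ E\, G_\af G_{\overline{\bt}}/G) + (C-E)\, G_\af G_{\overline{\bt}}/G$, yields an a priori upper bound $\Lambda \leq A'$ globally, for some $A'$ possibly larger than $A$. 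Assume for contradiction that $\Lambda(y_1) > A$ for some $y_1$; after a standard cutoff construction, $\Lambda - A$ attains a strictly positive interior maximum at some $y_0\in M\setminus\{x\}$, with the maximum localized away from $x$ (by the near-pole hypothesis) and away from infinity (by combining the a priori bound with the decay rate of $u$). At $y_0$, one differentiates the tensor $T_{\af\overline{\bt}}:= A\,u\, g_{\af\overline{\bt}} - \Phi_{\af\overline{\bt}}$, contracts with the maximizing eigenvector $v$, and applies the K\"{a}hler commutation identities for $G_{\af\overline{\bt}\gm\overline{\gm}}$ together with $\Delta G = 0$. The nonnegativity of the holomorphic bisectional curvature then makes the relevant curvature pairings nonnegative, and one obtains $\Delta(T_{\af\overline{\bt}} v^\af \overline{v}^\bt) < 0$ at $y_0$, contradicting the second-derivative test.

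The main obstacle I anticipate is the algebraic bookkeeping in this last computation. The nonlinear tensor $G_\af G_{\overline{\bt}}/G$ produces, after two differentiations, a mixture of third-order terms in $G$, first-order terms in the eigenvector direction, and curvature contractions; these must combine so that the resulting coefficient has the correct sign in the regime $C\leq -1$, with the subharmonicity $\Delta\Phi\geq 0$ above being what makes the lowest-order term work out. The hypothesis $C\leq -1$ is sharp here: for $-1<C<0$, $\Phi$ becomes superharmonic and the analogous inequality reverses. Once this key identity is isolated, the localization, cutoff, and control at infinity are essentially mechanical, and proceed as in the proof of Theorem \ref{mainthm}.
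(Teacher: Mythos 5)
Your proposal identifies a nice heuristic — the substitution $\Phi = G^{C+1}$, with $\Phi_{\af\overline{\bt}} = (C+1)G^{C}\bigl(G_{\af\overline{\bt}} + C\,G_\af G_{\overline{\bt}}/G\bigr)$ and $\Delta\Phi = C(C+1)G^{C-1}|\nab G|^2 \geq 0$ for $C \leq -1$ — which gives a clean conceptual explanation for the sign restriction. Your tensor $T'_{\af\overline{\bt}} = A\,u\,g_{\af\overline{\bt}} - \Phi_{\af\overline{\bt}}$ is just $-(C+1)G^{C}$ times the tensor $T_{\af\overline{\bt}} = G_{\af\overline{\bt}} + C\,G_\af G_{\overline{\bt}}/G - D\,G^{\frac{2n}{2n-2}}g_{\af\overline{\bt}}$ that the paper works with directly, so the reformulation is equivalent.

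However, there is a genuine gap: you explicitly punt on the core computation, writing that the ``algebraic bookkeeping in this last computation'' is the ``main obstacle'' and then asserting that once a ``key identity is isolated'' the rest is mechanical. That key identity \emph{is} the proof. The paper computes $\Delta T_{\af\overline{\bt}}$ in full, organizing it into roughly a dozen terms involving $R_{\af\overline{\bt}\dt\overline{\gm}} T_{\gm\overline{\dt}}$, $\frac{C}{G}T_{\af\overline{\gm}}T_{\gm\overline{\bt}}$, $CR_{\af\overline{\bt}\dt\overline{\gm}}B_{\gm\overline{\dt}}$, $-\frac{C^2+C}{G}(T_{\af\overline{\gm}}B_{\gm\overline{\bt}} + \cdots)$, $\frac{C^3+2C^2+C}{G}B_{\af\overline{\gm}}B_{\gm\overline{\bt}}$, and so on, and then checks term by term that each is nonpositive when contracted against a null vector of $T$, using $C \leq -1 \Rightarrow C^2+C \geq 0$ and $C^3 + 2C^2 + C \leq 0$, plus nonnegative bisectional curvature. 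The scalar subharmonicity $\Delta\Phi \geq 0$ is only the \emph{trace} of this information and is nowhere near enough to control the full Hessian.

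Two further points. First, replacing the matrix maximum principle with a scalar maximum principle on $\Lambda(y) = \sup_v \Phi_{\af\overline{\bt}}v^\af\overline{v}^\bt / (u\,g_{\af\overline{\bt}}v^\af\overline{v}^\bt)$ is not automatic: $\Lambda$ is a supremum of eigenvalue ratios and need not be $C^2$ at points of multiplicity, which is exactly the difficulty Hamilton's matrix maximum principle is designed to sidestep. Second, your treatment of the boundary at infinity glosses over the specific ingredient the paper needs: Colding's gradient estimate $|\nab G| \leq (2n-2)G^{\frac{2n-1}{2n-2}}$, used to show that $G_\af G_{\overline{\bt}}/G = O(r^{-2n})$, so that the error $(C-E)G_\af G_{\overline{\bt}}/G + (F-D)G^{\frac{2n}{2n-2}}g_{\af\overline{\bt}}$ tends to zero on $\{r=R\}$ as $R\to\infty$. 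Without this decay estimate, the boundary term at infinity does not vanish and the maximum principle argument does not close.
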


Theorems \ref{mainthm} and \ref{mainthm2} can be understood as an elliptic version of \cite{CN}, where Cao and Ni proved a complex analogue of Hamilton's matrix inequality for the heat equation on Riemannian manifolds. There are Matrix Li-Yau-Hamilton estimates for other parabolic equations as well, such as for the Ricci flow \cite{H1}, K{\"a}hler-Ricci flow \cite{N1}, and mean curvature flow \cite{H3}.

\vspace{0.2cm}

It is also worthwhile to view these theorems as a Hessian comparison theorem for the distance-like function $b$, defined to be $b = G^{\frac{1}{2-2n}}$. Then (\ref{hess_Cn}) is equivalent to $\Hess_{b^2} \leq 2g$, which is an equality precisely on $\C^n$. For the distance function itself, there is an abundance of Hessian comparison theorems both on Riemannian manifolds on K\"ahler manifolds. To name just a few among the K\"ahler cases, we note that in \cite{GW} and \cite{CN}, Hessian comparison theorems were proved for nonnegative holomorphic bisectional curvature, and in \cite{LW} for nonnegative holomorphic sectional curvature. Among many applications of the Hessian comparison theorems, we note that Liu \cite{L} applied \cite{LW} to prove that a complete K\"ahler manifold satisfies the three circle theorem if and only if it has nonnegative holomorphic sectional curvature. We expect that Theorems \ref{mainthm} and \ref{mainthm2} would also be useful in the analysis on manifolds with holomorphic bisectional curvature bounded below, considering the advantage of $b$ over the distance function that it satisfies an elliptic equation and is smooth. Indeed, $b$ is used as an almost splitting function for manifolds of nonnegative Ricci curvature in Cheeger-Colding theory \cite{CC}, \cite{CC1}. $\Hess_{b^2}$ is a term that comes up naturally in the careful analysis of weighted integrals measuring the Gromov-Hausdorff distance between such manifolds and their tangent cones \cite{CM}. 

\vspace{0.2cm}

In the rest of this paper we will prove Theorems \ref{mainthm} and \ref{mainthm2}. Before embarking on the proof, in Section 2 we will recall some facts about the positive Green function, then provide the proof in Section 3.

\newpage

\section{Nonparabolicty and the gradient estimate for Green function}

 The results in this section hold for Riemannian manifolds, not necessarily K\"ahler manifolds. 

Let $M$ be a complete Riemannian manifold of dimension $m$, and $x \in M$. If $M$ has nonnegative Ricci curvature, Varopoulos \cite{V} showed that $M$ is non-parabolic if and only if $$\int_s^\infty \frac{t}{\mathrm{Vol}\left(B_x(t)\right)} dt<\infty$$ for some $s>0$. If $M$ is nonparabolic, then by the work of Li and Tam \cite{LT} and Gilbarg and Serrin \cite{GS}, there exists a unique minimal positive symmetric Green function $G$ with pole at $x$ such that $G=O(r^{2-m})$, where $r$ is the distance from $x$. As in the introduction, define the function $b$ by
\begin{equation*}
	b=G^\frac{1}{2-m}.
\end{equation*} 

To prove  Theorem \ref{mainthm2}, we will need the following gradient estimate for $b$.

\begin{thm}[\cite{C}, Theorem 3.1] 
Let $M^m$ be a nonparabolic Riemannian manifold with nonnegative Ricci curvature of dimension $ m \geq 3$. Then $$|\nab b| \leq 1,$$
Or equivalently,
\begin{equation}\label{gradest}
|\nab G| \leq (m-2) G^{\frac{m-1}{m-2}}.
\end{equation}
\end{thm}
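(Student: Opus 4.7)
My plan is to combine Bochner's formula with a refined Kato inequality and a maximum principle argument on an exhausting family of domains. The first observation is that since $G$ is harmonic on $M \setminus \{x\}$ and $b = G^{1/(2-m)}$, a direct chain rule computation gives the scalar identity $b\,\Delta b = (m-1)|\nab b|^2$. Setting $f = |\nab b|^2$, the theorem reduces to showing $f \leq 1$ on $M \setminus \{x\}$.

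For the main pointwise inequality, I would apply Bochner to $b$, invoke $\mathrm{Ric}(\nab b, \nab b) \geq 0$, and substitute the scalar identity to obtain
\begin{equation*}
\tfrac{1}{2}\Delta f \;\geq\; |\Hess b|^2 + \frac{m-1}{b}\langle \nab b, \nab f\rangle - \frac{(m-1)f^2}{b^2}.
\end{equation*}
The heart of the argument is a refined Kato step: decompose $\Hess b$ in an orthonormal frame whose first vector is $\nab b/|\nab b|$. Using the identity $\Hess b(\nab b, \nab b) = \tfrac{1}{2}\langle \nab f, \nab b\rangle$ together with Cauchy--Schwarz applied to the trace over the $(m-1)$-dimensional block orthogonal to $\nab b$, one bounds $|\Hess b|^2$ from below by a combination of $(\Delta b)^2$, $\langle \nab f, \nab b\rangle^2/f$, and cross terms. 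After the algebra, I expect to reach an inequality of the form
\begin{equation*}
b^2\,\Delta f - 2(m-1)\, b \,\langle \nab b, \nab f\rangle \;\geq\; c(m)\,(f - 1)\cdot(\text{nonnegative factor}),
\end{equation*}
so that at any interior point where $f > 1$ attains a maximum, the left-hand side is $\leq 0$ while the right-hand side is positive, a contradiction.

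To close the argument, I need to control $f$ at both ends. Near the pole, the asymptotics $G(y) \sim c_m\, d(x,y)^{2-m}$ for the minimal Green function force $b(y) \sim d(x,y)$ up to normalization, so $|\nab b| \to 1$ as $y \to x$. At infinity, nonparabolicity together with $\mathrm{Ric} \geq 0$ gives properness of $b$, and the Cheng--Yau gradient estimate applied to the positive harmonic function $G$ gives a uniform bound on $|\nab b|$ on exterior regions. The maximum-principle contradiction on the annulus $\{r_1 \leq b \leq r_2\}$, followed by $r_1 \to 0$ and $r_2 \to \infty$, yields $f \leq 1$ globally. The main obstacle I anticipate is extracting a clean $(f-1)$ factor from the Bochner--Kato step: the algebra is sensitive to the choice of drift in the elliptic operator, and the wrong drift leaves cross terms with no definite sign. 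A secondary difficulty is justifying the outer limit rigorously; Colding's original proof sidesteps this by recasting the pointwise estimate as a monotonicity formula for level-set integrals of powers of $|\nab b|$, which may be a more robust route than a direct exhaustion argument.
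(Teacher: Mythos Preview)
The paper does not prove this statement; it is quoted verbatim from Colding's paper \cite{C} and used only as an input in the boundary argument for Theorem~\ref{mainthm2}. There is therefore no proof in the present paper to compare your proposal against.

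For what it is worth, your sketch is close in spirit to Colding's original argument: the identity $b\,\Delta b=(m-1)|\nab b|^2$ is correct, and the combination of Bochner's formula with the refined Kato inequality is exactly the mechanism that produces the needed differential inequality for $|\nab b|^2$. Your own caveat is the right one: the delicate step is not the interior computation but the behavior at infinity, and Colding handles this not by a naive exhaustion plus Cheng--Yau bound but through the level-set/monotonicity framework you allude to at the end. If you want to turn your sketch into a self-contained proof, that is the part that needs real work; the interior Bochner--Kato algebra, by contrast, goes through cleanly once one writes the drift term as $\frac{m-1}{b}\langle\nab b,\nab f\rangle$ and uses $\mathrm{tr}(\Hess_b)=(m-1)|\nab b|^2/b$.
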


\section{Proof of Theorems \ref{mainthm} and \ref{mainthm2}}

As in \cite{CN} and \cite{H1}, the main ingredient in the proof is the matrix maximum principle of Hamilton. The difference with the parabolic case is that we work on the spacial domain $\{0<r<R\}$ then take $R \to \infty$. In some sense, $r$ plays the role of $t$ in our proof.

Let $M$ be a K\"ahler manifold of dimension $n$. Let us work in holomorphic normal coordinates, so that the Riemannian Laplacian is given by $$\Dt = \frac{1}{2} \left(\nab_\gm \nab_{\overline{\gm}} + \nab_{\overline{\gm}}\nab_\gm \right).$$

Let $s \in \R$. By a direct calculation using that $\Dt G = 0$, we obtain the following expression of $\Dt G^{s}$.

\begin{equation*} 
\Dt G^{s} = s(s-1)G^{s-2} G_{\gm} G_{\overline{\gm}}.
\end{equation*}

In particular, taking $\displaystyle s = \frac{2n}{2n-2}$,

\begin{equation*}
\Dt G^{\frac{2n}{2n-2}} = \frac{4n}{(2n-2)^2} G^{-\frac{2n-4}{2n-2}}G_\gm G_{\overline{\gm}}.
\end{equation*}

It is convenient to define the tensor $B$ by
\begin{equation*}
	B_{\af \overline{\bt}} = \frac{G_{\af} G_{\overline{\bt}}}{G}.
\end{equation*}

Then $B$ is nonnegative definite with eigenvalues $\displaystyle \frac{G_{\gm} G_{\overline{\gm}}}{G}$ and 0.

Fix $\af, \bt \in \{1, \cdots, n\}$. To calculate $\Dt G_{\af \overline{\bt}}$ and $\Dt B_{\af \overline{\bt}}$, we take note of the following standard identities on K\"ahler manifolds, again in holomorphic normal coordinates. Here, the subscripts simply indicate taking the directional derivatives repeatedly.

\begin{align*}
G_{\af \gm \overline{\gm}}& = G_{\gm \overline{\gm} \af} + R_{\af \overline{\dt}} G_{\dt},\\
G_{\af \overline{\gm} \gm} &= G_{\overline{\gm} \gm \af}, \\
G_{\overline{\bt} \gm \overline{\gm}}& = G_{\gm \overline{\gm} \overline{\bt}}, \\
G_{\overline{\bt} \overline{\gm} \gm} &= G_{\overline{\gm} \gm \overline{\bt}} + R_{\dt \overline{\bt}} G_{\overline{\dt}}.
\end{align*}

Using that $\Dt G = 0$, we obtain that

\begin{align} \label{Dt_G}
\Dt (G_{\af }) &= \frac{1}{2}R_{\af \overline{\dt}} G_{\dt}, \nonumber \\
\Dt (G_{\overline{\bt}}) &= \frac{1}{2}R_{\dt \overline{\bt}} G_{\overline{\dt}}. 
\end{align}

One cal also calculate $\Dt  (G_{\af \overline{\bt}})$ in a straightforward manner,

\begin{equation} \label{Dt_G2}
\Dt (G_{\af \overline{\bt}}) = -R_{\af \overline{\bt} \dt \overline{\gm}} G_{\gm \overline{\dt}} + \frac{1}{2}R_{\gm \overline{\bt}} G_{\af \overline{\gm}} + \frac{1}{2}R_{\af \overline{\gm}} G_{\gm \overline{\bt} }.	
\end{equation}

(\ref{Dt_G2}) can be obtained by a standard calculation similar to the appendix \cite{P}. One can also calculate $\Dt u_{\af \overline{\bt}}$ for $u$ a solution to the heat equation as in \cite{NT}, Lemma 2.1, then observe that $G$ is a stationary solution.

We also calculate $\Dt B_{\af \overline{\bt}}$.

\begin{align} \label{Dt_B}
\Dt B_{\af \overline{\bt}} = &\frac{\Dt(G_\af) G_{\overline{\bt}}+\Dt(G_{\overline{\bt}}) G_\af}{G} + \frac{G_{\af \gm} G_{\overline{\bt} \overline{\gm}} + G_{\af \overline{\gm}} G_{\overline{\bt} \gm}}{G} \nonumber  \\ 
	&- \frac{G_{\af \overline{\gm}}G_{\overline{\bt}}G_{\gm}+G_{\af \gm}G_{\overline{\bt}}G_{\overline{\gm}}+G_{\overline{\bt} \gm}G_{\af}G_{\overline{\gm}}+G_{\overline{\bt} \overline{\gm}}G_{\af}G_{\gm}}{G^2} +2 \frac{G_\af G_{\overline{\bt}} G_\gm G_{\overline{\gm}}}{G^3} \nonumber  \\
	=&\frac{R_{\af \overline{\dt}} G_{\dt} G_{\overline{\bt}}+R_{\dt \overline{\bt}} G_{\overline{\dt}} G_\af}{2G} + \frac{G_{\af \gm} G_{\overline{\bt} \overline{\gm}} + G_{\af \overline{\gm}} G_{\overline{\bt} \gm}}{G}  \nonumber \\
	&- \frac{G_{\af \overline{\gm}}G_{\overline{\bt}}G_{\gm}+G_{\af \gm}G_{\overline{\bt}}G_{\overline{\gm}}+G_{\overline{\bt} \gm}G_{\af}G_{\overline{\gm}}+G_{\overline{\bt} \overline{\gm}}G_{\af}G_{\gm}}{G^2} +2 \frac{G_\af G_{\overline{\bt}} G_\gm G_{\overline{\gm}}}{G^3} \nonumber \\
	=&\frac{R_{\af \overline{\dt}} G_{\dt} G_{\overline{\bt}}+R_{\dt \overline{\bt}} G_{\overline{\dt}} G_\af}{2G} + \frac{ G_{\af \overline{\gm}} G_{\gm \overline{\bt} }}{G} + \frac{1}{G}\left(G_{\af \gm} -\frac{G_{\af} G_{\gm}}{G}\right)\left(G_{\overline{\bt} \overline{\gm}} - \frac{G_{\overline{\bt}}G_{\overline{\gm}}}{G} \right) \nonumber \\
	&- \frac{G_{\af \overline{\gm}}G_{\overline{\bt}}G_{\gm}+G_{\gm \overline{\bt} }G_{\af}G_{\overline{\gm}}}{G^2} +  \frac{G_\af G_{\overline{\bt}} G_\gm G_{\overline{\gm}}}{G^3}.
\end{align}

\begin{proof}[Proof of Theorem \ref{mainthm2}]
For $C < -1$ and $D \in \R$, define the tensor $T$ by 

\begin{equation}
T_{\af \overline{\bt}} = G_{\af \overline{\bt}} + C \cdot \frac{G_\af G_{\overline{\bt}}}{G} - D \cdot  G^{\frac{2n}{2n-2}} g_{\af \overline{\bt}}.
\end{equation}

Our goal is to show that under the assumptions of Theorem \ref{mainthm2}, $T_{\af \overline{\bt}} \geq 0$. Let us consider the domain $\{\ep < r< R\}$, where we will take $\ep \to 0$ and $R \to \infty$. To show that $T_{\af \overline{\bt}} \geq 0$, by the maximum principle it is enough to check that $$\Dt T_{\af \overline{\bt}}  \leq 0 \text{ on } \{\ep < r< R\},$$ and that $$T_{\af \overline{\bt}} \geq -f(r)$$ for some continuous $f: (0,\infty) \to (0, \infty)$ with $\displaystyle \lim_{r \to 0} f(r) = 0$ and $\displaystyle \lim_{r \to \infty} f(r) = 0$. In fact, for the maximum principle argument (see for instance \cite{H2}, \cite{CN}, \cite{P}) it suffices to check that $\Dt (T_{\af \overline{\bt}}) V_\af V_{\overline{\bt}} \leq 0$ when $T$ is nonnegative definite and $V$ is a null vector of $T$ so that $\displaystyle T_{\gm \overline{\dt}} V_{\gm} =T_{\gm \overline{\dt}} V_{\overline{\dt}} = 0$, instead of all vectors.

Using equations (\ref{Dt_G}), (\ref{Dt_G2}) and (\ref{Dt_B}), and simplifying the calculation noting that by the definition of Ricci curvature,

$$-R_{\af \overline{\bt} \dt \overline{\gm}} g_{\gm \overline{\dt}}+ \frac{1}{2}R_{\gm \overline{\bt}}g_{\af \overline{\gm}} + \frac{1}{2}R_{\af \overline{\gm}} g_{\gm \overline{\bt}} = 0,$$
it is now straightforward to calculate $\Dt T_{\af \overline{\bt}}$,

\begin{align} \label{Dt_T}
\Dt (T_{\af \overline{\bt}})
&= - R_{\af \overline{\bt} \dt \overline{\gm}} T_{\gm \overline{\dt}} + \frac{1}{2}R_{\gm \overline{\bt}}T_{\af \overline{\gm}}+ \frac{1}{2}R_{\af \overline{\gm}}T_{\gm \overline{\bt}} +\frac{C}{G}\left(G_{\af \gm} -\frac{G_{\af} G_{\gm}}{G}\right)\left(G_{\overline{\bt} \overline{\gm}} - \frac{G_{\overline{\bt}}G_{\overline{\gm}}}{G} \right)  \nonumber \\
& + \frac{C}{G} T_{\af \overline{\gm}} T_{\gm \overline{\bt}}+ CR_{\af \overline{\bt} \dt \overline{\gm}} B_{\gm \overline{\dt}}  + CD G^{\frac{2}{2n-2}} T_{\af \overline{\bt}} + CD^{2} G^{\frac{2n+2}{2n-2}} g_{\af \overline{\bt}} \nonumber \\
&-\frac{C^2+C}{G}\left(T_{\af \overline{\gm}} B_{\gm \overline{\bt}} + T_{\gm \overline{\bt}} B_{\af \overline{\gm}}\right) -\frac{(C^2+C)D}{G}G^{\frac{2n}{2n-2}} \left(g_{\af \overline{\gm}} B_{\gm \overline{\bt}} + g_{\gm \overline{\bt}} B_{\af \overline{\gm}}  \right) \nonumber \\
& + \frac{C^3 + 2C^2 + C}{G}B_{\af \overline{\gm}} B_{\gm \overline{\bt}} - D \cdot \frac{4n}{(2n-2)^2} G^{-\frac{4n-6}{2n-2}}B_{\gm \overline{\gm}} g_{\af \overline{\bt}}.	
\end{align}

Now we make some observations in order to check that the terms on the left hand side of (\ref{Dt_T}) are nonpositive on a null vector $V$ of $T$. For the first three terms, we observe that

 \begin{align*}
\left(- R_{\af \overline{\bt} \dt \overline{\gm}} T_{\gm \overline{\dt}} + \frac{1}{2}R_{\gm \overline{\bt}}T_{\af \overline{\gm}}+ \frac{1}{2}R_{\af \overline{\gm}}T_{\gm \overline{\bt}} 	\right) V_{\af} V_{\overline{\bt}} = -R_{\af \overline{\bt} \dt \overline{\gm}} T_{\gm \overline{\dt}} V_{\af} V_{\overline{\bt}} \leq 0,
 \end{align*}
since $T$ is nonnegative definite, and $M$ has nonnegative holomorphic bisectional curvature, hence nonnegative Ricci curvature.

For the remainder of the first row, since $C<0$, we obtain that $$\frac{C}{G}\left(G_{\af \gm} -\frac{G_{\af} G_{\gm}}{G}\right)\left(G_{\overline{\bt} \overline{\gm}} - \frac{G_{\overline{\bt}}G_{\overline{\gm}}}{G} \right) \leq 0,$$ and $$\displaystyle \frac{C}{G} T_{\af \overline{\gm}} T_{\gm \overline{\bt}} \leq 0.$$

By the assumption that $M$ has nonnegative holomorphic bisectional curvature, $$CR_{\af \overline{\bt} \dt \overline{\gm}} B_{\gm \overline{\dt}} V_\af V_{\overline{\bt}} = \frac{C}{G}R_{\af \overline{\bt} \dt \overline{\gm}}V_\af V_{\overline{\bt}} G_{\gm} G_{\overline{\dt}}\leq 0.$$

We have that $$\displaystyle CD G^{\frac{2}{2n-2}} T_{\af \overline{\bt}}V_{\af} V_{\overline{\bt}} = 0$$ 
since $V$ is a null vector of $T$. Since $CD \leq 0$ and $T $ is nonnegative definite, we obtain that  It is also obvious that $\displaystyle CD^2 G^{\frac{2n+2}{2n-2}} g_{\af \overline{\bt}}V_{\af} V_{\overline{\bt}} \leq 0$.

Next, $C \leq -1$ implies $C^2 + C \geq 0$. Using that $B$ is nonnegative definite and that $V$ is a null vector of $T$, the third row in (\ref{Dt_T}) also becomes nonpositive on $(V^\af)$,

\begin{equation*}
\left(-\frac{C^2+C}{G}\left(T_{\af \overline{\gm}} B_{\gm \overline{\bt}} + T_{\gm \overline{\bt}} B_{\af \overline{\gm}}\right) -\frac{(C^2+C)D}{G}G^{\frac{2n}{2n-2}} \left(g_{\af \overline{\gm}} B_{\gm \overline{\bt}} + g_{\gm \overline{\bt}} B_{\af \overline{\gm}}  \right) \right)	 V_{\af} V_{\overline{\bt}} \leq 0,
\end{equation*}

The last row of (\ref{Dt_T}) is obviously nonpositive as well, since $C^3 + 2C^2 + C \leq 0$ and $D \geq 0$. Combining these observations, we conclude that $\displaystyle \Dt T_{\af \overline{\bt} } V_{\af} V_{\overline{\bt}} \leq 0$.

It only remains to check the boundary. By the hypothesis of the theorem, we already have that $T \geq 0$ as $r \to 0$, and that 
\begin{equation} \label{CEFD}
T \geq (C-E)\frac{G_\af G_{\overline{\bt}}}{G} + (F - D)G^{\frac{2n}{2n-2}}g_{\af\overline{\bt}}\end{equation}
 on $M$. Since $G = O(r^{2-2n})$,	
 we have that $G^{\frac{2n}{2n-2}} = O(r^{-2n})$, so $G^{\frac{2n}{2n-2}} \to 0$ as $r \to \infty$. Also by the gradient estimate (\ref{gradest}), $$\left| \frac{G_\af G_{\overline{\bt}}}{G} \right| \leq C(n) G^{\frac{2n}{2n-2}} = O(r^{-2n}),$$  Where $C(n)$ is a dimensional constant. Thus we conclude that the right hand side of (\ref{CEFD}) tends to 0 as $r \to \infty$. Theorem \ref{mainthm2} now follows by the maximum principle.
\end{proof}

\vspace{0.2cm}

\begin{proof}[Proof of Theorem \ref{mainthm}]
Theorem \ref{mainthm} follows immediately from Theorem \ref{mainthm2} by replacing $C, D, E$, and $F$ with $\displaystyle \frac{2n}{2-2n}, C, \frac{2n}{2-2n}$, and $D$, resplectively.
\end{proof}

\vspace{0.2cm}
\begin{remark}
It would be desirable to drop the boundary conditions in both theorems, especially the assumptions at infinity (\ref{inf1}), (\ref{inf2}). In the parabolic case \cite{CN} the noncompact case can be dealt with using Li-Yau gradient estimate and the Hopf maximum principle. However, in our problem a naive application of this strategy leads to undesirable signs in the calculation, hence does not work. It is an interesting problem whether these assumptions can be removed by some other way.
\end{remark}

\end{document}